\newcommand{\HT}{\mathrm{HT}}
\newcommand{\GL}{\mathrm{GL}}
\newcommand{\mc}[1]{\mathcal{#1}}
\newcommand{\mbb}[1]{\mathbb{#1}}
\newcommand{\mr}[1]{\mathrm{#1}}
\newcommand{\diag}{\mathrm{diag}}
\newtheorem{theorem}{Theorem}
\newtheorem{lemma}{Lemma}
\theoremstyle{definition}
\newtheorem{remark}{Remark}
\begin{document}

\title{Slope classicality via completed cohomology}
\begin{abstract} 
We give a  new proof of the slope classicality theorem in classical and higher  Coleman theory for modular curves at arbitrary level using the completed cohomology classes attached to overconvergent modular forms. The latter give an embedding of the quotient of overconvergent modular forms by classical modular forms, which is the obstruction space for classicality in either cohomological degree, into a unitary representation of $\GL_2(\mbb{Q}_p)$. The $U_p$ operator becomes a double-coset, and unitarity yields the slope vanishing. 
\end{abstract}

\author{Sean Howe}
 
\address{Department of Mathematics, University of Utah, Salt Lake City 84112}
 \email{sean.howe@utah.edu}

\maketitle 

\section{Introduction and proof of slope classicality}
Fix a sufficiently small compact open subgroup $K^p \leq \GL_2(\mbb{A}_f^{(p)})$ and let $\mbb{C}_p$ be the completion of an algebraic closure of $\mbb{Q}_p$. 
Let $X_1(p^n)/\mbb{C}_p$ be the smooth compactification of the modular curve parametrizing elliptic curves with a point of exact order $p^n$ and level $K^p$ structure. Everywhere below we view $X_1(p^n)$ as an adic space over $\mbb{C}_p$. The \emph{closed} canonical ordinary locus $X_{1}(p^n)_e$ is the topological closure of the locus of rank one points parameterizing elliptic curves of ordinary reduction equipped with a point generating the canonical subgroup of level $p^n$. We write $X_{1}(p^n)_w = X_1(p^n)\backslash X_1(p^n)_e$ for its open complement (the subscripts $e$ and $w$ refer to the trivial and non-trivial elements of the Weyl group for $\GL_2$). 

Writing $\omega$ for the modular sheaf, the space $H^0(X_{1}(p^n)_e, \omega^k)$ is naturally identified with the direct sum of spaces of overconvergent modular forms of weights $\kappa$ such that $\kappa=z^k \chi$ for $\chi$ a character of $(\mbb{Z}/p^n\mbb{Z})^\times$. From the perspective of the higher Coleman theory of Boxer-Pilloni \cite{boxer-pilloni:higher-hida-modular, boxer-pilloni:higher-coleman}, it is natural to also consider the compactly supported cohomology $H^1_c(X_1(p^n)_w, \omega^k)$. These groups are related by the exact sequence of compactly supported topological sheaf cohomology 

\begin{equation}\label{eq.csc-seq} 
\begin{tikzcd}
	0 & {H^0(X_1(p^n), \omega^k) } & {H^0(X_1(p^n)_e, \omega^k)} \\
	{H^1_c(X_1(p^n)_w, \omega^k) } & { H^1(X_1(p^n), \omega^k)} & 0.
	\arrow[from=1-1, to=1-2]
	\arrow[from=1-2, to=1-3]
	\arrow[from=1-3, to=2-1]
	\arrow[from=2-1, to=2-2]
	\arrow[from=2-2, to=2-3]
\end{tikzcd}\end{equation}
arising from the exact sequence of sheaves on the topological space $X_1(p^n)$
\[ 0 \rightarrow j_!j^{-1} \omega^k \rightarrow \omega^k \rightarrow i_*i^{-1} \omega^k \rightarrow 0,\; j: X_{1}(p^n)_w \hookrightarrow X_1(p^n),\, i: X_1(p^n)_e \hookrightarrow X_1(p^n). \]
As in \cite{boxer-pilloni:higher-hida-modular, boxer-pilloni:higher-coleman}, there is an operator $U_p$ on each of these spaces induced by a cohomological correspondence and extending a classical double-coset Hecke operator $U_p$ on $H^\bullet(X_1(p^n), \omega^k)$ (up to matching choices of the normalization). 
For any $s \in \mbb{R}$ and $\mbb{C}_p$-vector space $V$ equipped with an action of a linear operator $U_p$, we can pass to the part $V^{<s}$ of slope $<s$, defined to be the span of all generalized eigenspaces of $U_p$ for eigenvalues $\lambda$ with $|\lambda|>p^{-s}$. The main result is:
\begin{theorem}[Slope classicality]\label{theorem.classicality}For $t \in \mbb{Z}\backslash{\{0\}}$,   (\ref{eq.csc-seq}) induces isomorphisms
\begin{align*} H^0(X_1(p^n), \omega^{1+t})^{<|t|} &=H^0(X_1(p^n)_e,\omega^{1+t})^{<|t|} \textrm{ and } \\  H^1_c(X_1(p^n)_w, \omega^{1+t})^{< |t|}&=H^1(X_1(p^n), \omega^{1+t})^{<|t|}. \end{align*}
\end{theorem}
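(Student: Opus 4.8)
The plan is to reduce both isomorphisms to a single slope-vanishing statement and then establish it by transporting $U_p$ to a double-coset operator on a unitary representation. In (\ref{eq.csc-seq}) the map $H^0(X_1(p^n),\omega^{1+t})\to H^0(X_1(p^n)_e,\omega^{1+t})$ is injective and the map $H^1_c(X_1(p^n)_w,\omega^{1+t})\to H^1(X_1(p^n),\omega^{1+t})$ is surjective, while the connecting homomorphism identifies the cokernel of the former with the kernel of the latter. Writing $Q$ for this common obstruction space,
\begin{align*}
Q &= \mathrm{coker}\l(H^0(X_1(p^n),\omega^{1+t})\hookrightarrow H^0(X_1(p^n)_e,\omega^{1+t})\r)\\
&= \ker\l(H^1_c(X_1(p^n)_w,\omega^{1+t})\to H^1(X_1(p^n),\omega^{1+t})\r),
\end{align*}
and using that formation of the slope $<|t|$ part is functorial and exact on the finite-slope pieces where it is defined, both displayed isomorphisms are equivalent to the single vanishing $Q^{<|t|}=0$.

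To bound the slopes on $Q$ I would use the completed cohomology classes attached to overconvergent modular forms to build a $U_p$-equivariant injection $Q\hookrightarrow V$, where $V$ is the relevant $\GL_2(\mbb{Q}_p)$-subrepresentation of completed cohomology. The decisive feature of $V$ is \emph{unitarity}: integral completed cohomology supplies a $\GL_2(\mbb{Q}_p)$-stable lattice, so every $g\in\GL_2(\mbb{Q}_p)$ acts by an isometry for the associated ultrametric norm (note that any intervening Tate twist is by the cyclotomic character, which is unit-valued, so it does not disturb unitarity). Consequently any double-coset operator $[KgK]=\sum_i g_i$ has operator norm $\le\max_i\|g_i\|=1$ on $V$.

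The crucial remaining point is the normalization. Under the embedding, $U_p$ acting on the weight-$(1+t)$ classes should be intertwined with $p^{|t|}$ times an honest double coset $[KgK]$ on $V$; the power $|t|=|k-1|$ is exactly the $\rho$-shift incurred in passing from the weight-$k$ linearization of $\omega^{1+t}$ to the raw $\GL_2(\mbb{Q}_p)$-action on completed cohomology, which is also why the boundary weight $k=1$ (that is, $t=0$) is excluded. Granting this, the factor $p^{|t|}$ contributes norm $p^{-|t|}$ and $[KgK]$ has norm $\le 1$, so $U_p$ has operator norm $\le p^{-|t|}$ on $Q$; hence every eigenvalue $\lambda$ of $U_p$ on $Q$ satisfies $|\lambda|\le p^{-|t|}$. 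As $Q^{<|t|}$ is by definition the span of the generalized eigenspaces with $|\lambda|>p^{-|t|}$, we conclude $Q^{<|t|}=0$, which is what we wanted.

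I expect the main obstacle to be the construction of the class map together with the simultaneous verification of its injectivity and of the precise power of $p$. Injectivity is the assertion that an overconvergent form whose attached completed-cohomology class vanishes is already classical, and the $U_p$-compatibility requires tracking the universal $p$-isogeny correspondence through the Hodge--Tate period map on the canonical locus, where the induced action on $\omega$ is what produces the factor $p^{|t|}$ uniformly in the sign of $t$ and in the tame level. Once both are in hand the unitarity estimate is immediate and, because $Q$ and its $U_p$-action are shared between the two cohomological degrees, a single bound settles both isomorphisms at once.
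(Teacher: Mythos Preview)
Your proposal is correct and follows the same route as the paper: reduce both isomorphisms to $Q^{<|t|}=0$ via the shared obstruction space, then embed $Q$ into the unitary representation $H^1(X,\mc{O}_X)^{N(\mbb{Z}_p)}$ where $U_p$ matches $p^{|t|}$ times a norm-$\le 1$ double coset (this matching is exactly the paper's Lemma~\ref{lemma:Up}, which you correctly flag as the main obstacle). One small point the paper makes explicit that you elide: right-exactness of the slope-$<|t|$ functor on the second short exact sequence is not automatic from ``exactness on finite-slope pieces'' and is deduced from compactness of $U_p$ on overconvergent forms.
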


In cohomological degree zero, this is a result of Coleman \cite{coleman:classical,coleman:higher-level}. In degree one, this is a result of Boxer-Pilloni \cite{boxer-pilloni:higher-hida-modular, boxer-pilloni:higher-coleman}\footnote{In \cite{boxer-pilloni:higher-hida-modular} and the introduction of \cite{boxer-pilloni:higher-coleman} this result is stated at level $\Gamma_0(p)$ using the smaller group of cohomology with support in $\overline{X_0(p)_w^{\mr{ord}}}$. It is immediate from the results of loc. cit. that this smaller space has the same finite slope part, and arbitrary level is treated in \cite[Theorem 5.12.2]{boxer-pilloni:higher-coleman} --- we thank George Boxer for answering our questions about these results in the higher level case.} (who also reprove Coleman's result). It is also an immediate consequence of the following lemma, which itself is immediate from the results of \cite{howe:ocmfht} or \cite{pan:locally-an}. To state it, we denote by $X$ the infinite level (compactified) modular curve of prime-to-$p$ level $K^p$. It admits an action of $\GL_2(\mbb{Q}_p)$ and, by Scholze's primitive comparison (see \cite[Corollary 4.4.3]{pan:locally-an}), $H^1(X,\mc{O}_X)$ is identified with the $\mbb{C}_p$-completed cohomology of the tower of modular curves of prime-to-$p$ level $K^p$ --- below we need only that it is a Banach space with a unitary action of $\GL_2(\mbb{Q}_p)$ (it is unitary because the unit ball, i.e. the image of $H^1(X, \mc{O}^+_X)$, is preserved). In the following $N\leq \GL_2$ is the group of upper triangular unipotents.
\begin{lemma}\label{lemma:Up} Let $U_p$ denote the Hecke operator of \cite{boxer-pilloni:higher-hida-modular, boxer-pilloni:higher-coleman} (the normalization depends on the weight; see \S\ref{s.normalizations}). For $t \neq 0$, the cup products of \cite{howe:ocmfht} give an embedding   
\[ H^0(X_1(p^n)_e, \omega^{1+t})/H^0(X_1(p^n),\omega^{1+t}) \hookrightarrow H^1(X, \mc{O}_X)^{N(\mbb{Z}_p)} \]
that matches $U_p$ with the double coset operator $p^{|t|}\cdot[N(\mbb{Z}_p)\diag(p,1)N(\mbb{Z}_p)].$ 
\end{lemma}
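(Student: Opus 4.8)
The plan is to unpack the three assertions bundled into the lemma --- that the cup product of \cite{howe:ocmfht} produces classes in $H^1(X,\mc{O}_X)$, that these are $N(\mbb{Z}_p)$-invariant and detect exactly the overconvergent-modulo-classical quotient, and that the construction intertwines $U_p$ with $p^{|t|}$ times the double coset --- and to reduce the first two to the cited results while spending the real effort on the third. First I would recall the map. Over the infinite-level curve $X$ the universal trivialization of the Tate module of the universal elliptic curve puts the relative Hodge--Tate sequence in the form
\[ 0 \to \omega^{-1}(1) \to \mc{O}_X^{\oplus 2} \to \omega \to 0, \]
and the cup product of \cite{howe:ocmfht} pairs a section of $\omega^{1+t}$ over the canonical locus, pulled back to $X$, against a canonical weight-$t$ class extracted from this extension and the trivialization, landing in $H^1(X,\mc{O}_X)$. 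The role of the canonical ordinary locus $X_1(p^n)_e$ is that under the Hodge--Tate period map $\pi_{\HT}\colon X \to \mbb{P}^1$ it lies over a strict neighborhood of the $B$-fixed point, whose stabilizer contains $N(\mbb{Z}_p)$; since the class used in the cup product is built $N(\mbb{Z}_p)$-equivariantly there, the output is $N(\mbb{Z}_p)$-invariant.

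For the two remaining structural properties I would quote \cite{howe:ocmfht} (or \cite{pan:locally-an}): that a section extending to all of $X_1(p^n)$, i.e.\ a classical form, cups to a coboundary, so the map factors through the quotient, and that the induced map on the quotient is injective --- the latter being precisely the statement that the obstruction to classicality is detected inside completed cohomology. With these in hand, the content left to verify is the compatibility of the two operators, which is the heart of the argument.

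The main obstacle is matching $U_p$ with the double coset and pinning the scalar. Geometrically $U_p$ is the cohomological correspondence attached to the degree-$p$ isogeny dividing out a subgroup complementary to the canonical subgroup, while on completed cohomology the Hecke action at $p$ is carried by the $\GL_2(\mbb{Q}_p)$-action on $X$, and the relevant contraction is $\diag(p,1)$, which near the $B$-fixed point of $\mbb{P}^1$ contracts toward the canonical locus exactly as $U_p$ does. Since $\diag(p,1)^{-1} N(\mbb{Z}_p)\diag(p,1) = N(p\mbb{Z}_p)$ has index $p$, one has $N(\mbb{Z}_p)\diag(p,1)N(\mbb{Z}_p) = \bigsqcup_{u} u\,\diag(p,1)N(\mbb{Z}_p)$ over the $p$ cosets $u \in N(\mbb{Z}_p)/N(p\mbb{Z}_p)$, so the double-coset operator on an $N(\mbb{Z}_p)$-invariant class is $\sum_u u\,\diag(p,1)$ --- precisely the shape of $U_p$ as a sum over the $p$ quotient isogenies. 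I would then compute the scalar by which $\diag(p,1)$ acts on the cup product class: it rescales the Hodge--Tate coordinate, hence $\omega$, by a power of $p$, and raising to the weight $1+t$ against the normalization recalled in \S\ref{s.normalizations} produces the factor $p^{|t|}$.

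The subtleties I expect to fight with are all in this final computation: fixing conventions (including the central twist relating $\diag(p,1)$ to the classical isogeny matrix $\diag(1,p)$) so that the regimes $t>0$ and $t<0$ --- the two sides of the Weyl group and the two cohomological degrees of Theorem \ref{theorem.classicality} --- are governed uniformly by $|t|$, and reconciling the integral normalization of $U_p$ in \cite{boxer-pilloni:higher-hida-modular, boxer-pilloni:higher-coleman} with the unitary normalization of the double coset. Getting the power exactly right is the whole point: once $U_p$ is identified with $p^{|t|}$ times a sum of unitary operators, hence an operator of norm $\le 1$ on the unit ball of $H^1(X,\mc{O}_X)$, every eigenvalue of $U_p$ on the quotient has slope $\ge |t|$, which is exactly what feeds the slope-vanishing deduction of Theorem \ref{theorem.classicality}.
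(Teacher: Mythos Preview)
Your plan is the paper's plan: quote \cite{howe:ocmfht} for the embedding and its kernel, then match $U_p$ with the double coset up to a twist. But you stop exactly where the content is, and your one-line sketch of the scalar computation (``raising to the weight $1+t$ against the normalization \dots produces $p^{|t|}$'') would not produce $|t|$ if executed as written. The missing idea is that the embedding uses \emph{different} trivializing sections of $\pi_\HT^*\mc{O}(k)$ in the two regimes: for $k=1+t\ge 2$ the map is $s\mapsto[s/x^k]$, while for $k=1+t\le 0$ it is $s\mapsto[s/(xy^t)]$. These are two different highest-weight vectors, on which the Borel acts by the distinct characters $a^k$ and $ac^t$. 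It is this distinction, paired with the weight-dependent normalization $U_p=p^{-1}U_p^{\mr{naive}}$ for $k\ge 1$ versus $U_p=p^{-k}U_p^{\mr{naive}}$ for $k\le 1$, that makes the two cases both land on $p^{|t|}$. A single ``raise to the weight $1+t$'' gives $p^{1+t}$ or $p^t$, not $p^{|t|}$, and for $t<0$ the section $x^k$ is not even available.

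Concretely, the paper first verifies on moduli that $[N(\mbb{Z}_p)\diag(p,1)N(\mbb{Z}_p)]$ on $M_k^{\dagger,N(\mbb{Z}_p)}$ equals $U_p^{\mr{naive}}$ (your coset decomposition, plus an explicit check that uses the $\omega_{E^\vee}$ equivariant structure rather than $\omega_E$ --- another place to slip). Then: for $t>0$, $\diag(p,1)\cdot x^k=p^kx^k$, so after dividing by $x^k$ the double coset becomes $p^{-k}U_p^{\mr{naive}}=p^{-k}(pU_p)=p^{-t}U_p$; for $t<0$, $\diag(p,1)\cdot(xy^t)=p\cdot xy^t$, so the double coset becomes $p^{-1}U_p^{\mr{naive}}=p^{-1}(p^kU_p)=p^{t}U_p$. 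Either way $U_p$ is $p^{|t|}$ times the double coset. Your outline is right, but you need to make this two-vector, two-normalization computation explicit; hoping it is ``governed uniformly by $|t|$'' is exactly the step that requires proof.
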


\begin{proof}[Proof of Theorem \ref{theorem.classicality}, assuming Lemma \ref{lemma:Up}] Consider the operator
\[ p^{|t|}\cdot[N(\mbb{Z}_p)\diag(p,1)N(\mbb{Z}_p)] = p^{|t|} \sum_{i=0}^{p-1} \begin{bmatrix} p & i \\ 0 & 1 \end{bmatrix} \]
 on $H^1(X, \mc{O}_X)^{N(\mbb{Z}_p)}$. Because the action of $\GL_2(\mbb{Q}_p)$ is unitary, it has operator norm $\leq 1/p^{|t|}$, so the slope $<|t|$ part vanishes in $H^1(X, \mc{O}_X)^{N(\mbb{Z}_p)}$. If we write 
\[ Q_t:=\left(H^0(X_1(p^n)_e, \omega^{1+t})/H^0(X_1(p^n),\omega^{1+t})\right),\]
then, combining the above with Lemma \ref{lemma:Up}, we find that for $t \neq 1$, $Q_t^{<|t|}=0$.  To obtain Theorem \ref{theorem.classicality}, we split (\ref{eq.csc-seq}) for $k=1+t$ into two short exact sequences:
\begin{eqnarray*}
	 0 \rightarrow H^0(X_1(p^n), \omega^{1+t}) \rightarrow H^0(X_1(p^n)_e, \omega^{1+t}) \rightarrow Q_t \rightarrow 0 \textrm{ and }\\ 0 \rightarrow Q_t \rightarrow 
	{H^1_c(X_1(p^n)_w, \omega^{1+t}) } \rightarrow { H^1(X_1(p^n), \omega^{1+t})} \rightarrow 0. 
\end{eqnarray*}
Taking the slope $<|t|$ part yields the isomorphisms -- for the first sequence this is immediate since this functor is always left exact, and for the second sequence right exactness follows from compactness of the $U_p$ operator on overconvergent forms. 
\end{proof}

It thus remains only to prove Lemma \ref{lemma:Up}. This is essentially immediate from the results of \cite{howe:ocmfht} or \cite{pan:locally-an}, \emph{once the $\GL_2(\mbb{Q}_p)$-actions are matched up.} This matching is actually a bit subtle, as there are multiple possible conventions for the Hodge-Tate period map and the equivariant structure on the modular sheaf. Any set of choices gives the same $\GL_2(\mbb{Q}_p)$-action modulo inverse transpose and some determinants, so often the precise choices are irrelevant. Here though we must follow a power of $p$ coming from the action of $\diag(p,1)$, so it is crucial to screw our heads on exactly right on this point. In the next section we fix normalizations then prove Lemma \ref{lemma:Up}. 

\section{Normalizations and proof of Lemma \ref{lemma:Up}}\label{s.normalizations}

\subsection{Choices}
We fix the action of $\GL_2(\mbb{Q}_p)$ on $X$ so that, over the non-compactified infinite level curve $Y$, $\GL_2(\mbb{Q}_p)=\mr{Aut}(\mbb{Q}_p^2)$ acts by composition with the trivialization of the Tate module of the universal elliptic curve -- i.e., we use the action on the homological normalization of the moduli problem. This differs by an inverse transpose from the cohomological normalization, where the action is on the trivialization of the first \'{e}tale cohomology of the universal elliptic curve. 

We take the Hodge-Tate period map $\pi_\HT: X \rightarrow \mbb{P}^1$ so that $\pi_{\HT}|_Y$ is the classifying map for the Hodge-Tate line inside of the first \'{e}tale cohomology of the universal elliptic curve. Thus over $Y$ we have a $\GL_2(\mbb{Q}_p)$-equivariant commuting diagram
\[\begin{tikzcd}
	{\mc{O}_X e_1 \bigoplus \mc{O}_X e_2 } & {\mc{O}_X \otimes V_p(E)} & {\omega_{E^\vee}} \\
	{\mc{O}_{\mbb{P}^1}x \bigoplus \mc{O}_{\mbb{P}^1} y} & {\mc{O}_{\mbb{P}^1} \otimes H^0(\mbb{P}^1,\mc{O}_{\mbb{P}^1}(1)) } & {\mc{O}_{\mbb{P}^1}(1).}
	\arrow["{\pi_\HT^*}", from=2-1, to=1-1]
	\arrow["{\pi_\HT^*}", from=2-2, to=1-2]
	\arrow["\sim", from=1-1, to=1-2]
	\arrow["\sim", from=2-1, to=2-2]
	\arrow["{\pi_\HT^*}", from=2-3, to=1-3]
	\arrow[two heads, from=2-2, to=2-3]
	\arrow[two heads, from=1-2, to=1-3]
\end{tikzcd}\]
Here $e_1$ and $e_2$ are the universal basis for the Tate module $V_p(E)=T_p(E)[1/p]$, $x$ and $y$ are the standard basis for $H^0(\mbb{P}^1, \mc{O}_{\mbb{P}^1}(1))$ so that homogeneous coordinates are $[x:y]$, and $E^\vee$ denotes the dual of the universal elliptic curve. Of course, there is a canonical isomorphism $E^\vee \cong E$ inducing $\omega_{E^\vee} \cong \omega_E$, however, this isomorphism does not respect the natural $\GL_2(\mbb{Q}_p)$-equivariant structures! Equivariantly,\[ \omega_{E^\vee} = \omega_E\otimes |\det|^{-1}, \]
where here $|\det|$ comes from the action of isogenies on $H^1(E, \Omega_E)$. Note that this twist is actually on the entire $\GL_2(\mbb{A}_f)$-action (via $(g_\ell)_\ell \mapsto \prod_\ell |\det(g_\ell)|_\ell^{-1}$), so that the distinction between these equivariant structures also determines the normalization of the prime-to-$p$ Hecke operators. Below we will continue as in the introduction to write simply $\omega$ for the modular sheaf, with the understanding that we have adopted the equivariant structure described above. 
 \newcommand{\cl}{{\mr{cl}}}

Under the natural map to $X \rightarrow X_1(p^n)$, $X_1(p^n)_e$ is the image of $\pi_\HT^{-1}([0:1])$, and the action of $\GL_2(\mbb{Q}_p)$ on $\langle x, y \rangle = H^0(\mbb{P}^1,\mc{O}_{\mbb{P}^1}(1))$ is by the standard representation 
\[ \begin{bmatrix} a & b \\ c & d\end{bmatrix} \cdot x= ax+cy \textrm{ and } \begin{bmatrix} a & b \\ c& d \end{bmatrix} \cdot y= bx+ dy.  \]

\subsection{The $U_p$ operator}
The operator $U_p^\mr{naive}$ at level $X_1(p^n)$ of \cite{boxer-pilloni:higher-hida-modular, boxer-pilloni:higher-coleman} is defined using the correspondence $C$ parameterizing degree $p$-isogenies $\psi:(E_1, P_1) \rightarrow (E_2, P_2)$ (here we suppress prime-to-$p$ level structure from the notation). Writing the two obvious projections as $p_1,p_2 : C\rightarrow X_1(p^n)$, $U_p^\mr{naive}$ is defined on $\omega^k$ as $\mr{tr}\circ {p_{1}}_!\circ \psi^* \circ p_2^*$. Given a geometric point $(E, P)$ that is not a cusp and a non-zero differential $\eta$ on $E$, we can compute $(U^{\mr{naive}}_p f)(E, P, \eta)$ as follows: first choose a basis $(e_1, e_2)$ of $T_p(E)$ such that $e_1$ reduces to $P$ mod $p^n$. Then, for $0 \leq i \leq p-1$, write 
\[ \psi_{i}: E \rightarrow E_i:=E/\langle i\overline{e}_1 + \overline{e}_2\rangle, \]
where $\overline{e}_i$ denotes the image of $e_i$ in $E[p]$. Then $\psi_i^*: \omega_{E_i} \rightarrow \omega_{E}$ is invertible, and 
\[ (U^{\mr{naive}}_p f)(E, P, \eta) = \sum_{i=0}^{p-1} f(E_i, \psi_i(P), (\psi_i^*)^{-1}\eta). \]

 We will now realize this same $U_p^\mr{naive}$ as a double-coset operator: let $B$ denote the upper triangular Borel in $\GL_2$. The space of overconvergent modular forms of weight $k$ at any finite level $\Gamma_1(p^n)$ is naturally embedded as the $B_1(p^n)=\Gamma_1(p^n) \cap B(\mbb{Q}_p)$ invariants in the $B(\mbb{Q}_p)$-representation
\[ M_k^\dagger := H^0([0:1], ({\pi_\HT}_* \pi_\HT^* \mc{O}(k))^{\mr{sm}}), \]
where here the superscript $\mr{sm}$ denotes the subsheaf of ${\pi_\HT}_* \pi_\HT^* \mc{O}(k)={\pi_{\HT}}_*\omega^k$ whose sections over a quasi-compact open $U$ are those with locally constant orbit map for the action the open stabilizer of $U$ in $\GL_2(\mbb{Q}_p)$ (i.e. the sections coming from finite level). For more on this construction, see \cite[\S3.1]{howe:ocmfht}.

The space $M_k^\dagger$ contains the space of classical modular forms 
\[ M_k^\cl = H^0(\mbb{P}^1, ({\pi_\HT}_* \pi_\HT^* \mc{O}(k))^{\mr{sm}}) \]
$B(\mbb{Q}_p)$-equivariantly by restriction. 
By a classical computation, the action of 
\[ N(\mbb{Z}_p)\diag(p,1)N(\mbb{Z}_p) = \sum_{i=0}^{p-1} \begin{bmatrix} p & i \\ 0 & 1 \end{bmatrix} =: \sum_{i=0}^{p-1} M_i   \]
on $M_k^{\dagger, N(\mbb{Z}_p)}$ is identified with $U_p^{\mr{naive}}$ on $M_k^{\dagger, B_1(p^n)}=H^0(X_1(p^n), \omega^k)$ --- for completeness, we explain it here. It suffices to check at geometric points away from cusps, so we can compare with the explicit formula given above for $U_p^{\mr{naive}}$. We have
\begin{align*} (N(\mbb{Z}_p)\diag(p,1)N(\mbb{Z}_p) \cdot f)(E, e_1, e_2, \eta)&  = \sum_{i=0}^{p-1} \left(M_i \cdot f\right)(E, e_1, e_2, \eta) \\
& = \sum_{i=0}^{p-1} f(E, p e_1, i e_1 + e_2, \eta).\end{align*}
By the following commuting diagram, 
\[\begin{tikzcd}
	{\mbb{Q}_p^2} & {\mbb{Q}_p^2} & {V_p E} \\
	&& {V_p(E_i)}
	\arrow["{e_1, e_2}", from=1-2, to=1-3]
	\arrow["{ M_i}", from=1-1, to=1-2]
	\arrow["{\psi_i(e_1), \psi_i(\frac{1}{p} (ie_1 + e_2))}"', from=1-1, to=2-3]
	\arrow["{\psi_i^\vee}"', from=2-3, to=1-3]
\end{tikzcd}\]
We see that $f(E, p e_1, i e_1 + e_2, \eta)$ is equal to $f(E_i, \psi_i(e_1), \psi_i(\frac{1}{p} (ie_1 + e_2)), \eta_i)$ for some $\eta_i$. One is tempted to guess that $\eta_i$ is $(\psi_i^\vee)^* \eta$, but this is not the case! Indeed, this would be the true only if we used the $\omega_E$-equivariant structure. Since we use the $\omega_{E^\vee}$ equivariant structure, we must replace $\psi_i^\vee$ with its dual $\psi_i$ and then $\eta_i=(\psi_i^{*})^{-1}\eta$. Thus we do indeed recover the formula described above for $U_p^{\mr{naive}}$. 

Recall \cite[sentence preceding Theorem 5.13]{boxer-pilloni:higher-hida-modular} the normalized operator\[ U_p := \begin{cases} p^{-1} U_p^{\mr{naive}} & \textrm{ if } k \geq 1\\  	
p^{-k} U_p^{\mr{naive}} & \textrm{ if } k \leq 1 
 \end{cases} \textrm{ so that } U_p^{\mr{naive}} = \begin{cases} p U_p & \textrm{ if } k \geq 1\\  	
p^{k} U_p& \textrm{ if } k \leq 1.  \end{cases} \]

\subsection{Proof of Lemma \ref{lemma:Up} and concluding remarks}
\begin{proof}[Proof of Lemma \ref{lemma:Up}]
We first treat the case $k=1+t \geq 2$. Then, for any $s \in M_k^\dagger$, $s/x^k$ is a function on $\mc{O}_X$ defined on the pre-image of a punctured neighborhood of $[0:1]$ under $\pi_\HT$. It thus determines a Cech cohomology class $[s/x^k]$ in $H^1(X,\mc{O}_X)$. Then \cite[Theorem A]{howe:ocmfht} implies that $s \mapsto [s/x^k]$ induces an injection $M_k^\dagger/M_k^{\mr{cl}} \hookrightarrow H^1(X, \mc{O}_X)$ -- actually, in \cite{howe:ocmfht} the results are stated using cusp forms and compactly supported completed cohomology, but given the identification of $H^1(X,\mc{O}_X)$ with completed cohomology, one obtains the desired statement by the same arguments. The map is $B(\mbb{Q}_p)$-equivariant if one twists the action on $M_k^\dagger$ by 
\[ \begin{bmatrix} a & b \\ 0 & c \end{bmatrix} \mapsto a^{-k} \]
(the twist comes from the action on $x^{k}$, of course!). We deduce that $p^{-k}U_p^{\mr{naive}}=p^{-k}(pU_p)=p^{-t}U_p$ is identified with $[N(\mbb{Z}_p)\diag(p,1)N(\mbb{Z}_p)]$, as desired. 

We now treat the case $k=1+t \leq 0$. In this case, \cite[Theorem A]{howe:ocmfht} show that $s \mapsto [s/(xy^t)]$ induces an embedding $M_k^\dagger/M_k^\cl \hookrightarrow H^1(X, \mc{O}_X)$. Actually, here one must be slightly more careful invoking the arguments of \cite{howe:ocmfht}, which are stated with cusp forms, in the case $k=0$: of course $M_k^\cl= 0$ when $k < 0$, but when $k=0$ we have that $M_0^\cl$ is the locally constant functions whereas the cusp forms are still trivial. However, it is elementary to see that $M_0^\cl$ is in the kernel (as $s/(xy^{-1})=s/z$ extends to a function on the complement of $[0:1]$ where $1/z$ is a local coordinate), and the argument of loc. cit. still establishes an injection on the quotient by $M_0^\cl$. The embedding is $B(\mbb{Q}_p)$-equivariant if we twist  $M_k^\dagger$ by 
\[ \begin{bmatrix} a & b \\ 0 & c \end{bmatrix} \mapsto a^{-1}c^{-t}, \]
where again the twist comes from the action on $xy^t$. We deduce that $p^{-1}U_p^\mr{naive}=p^{-1}(p^kU_p)=p^{t}U_p$ is matched with $[N(\mbb{Z}_p)\diag(p,1)N(\mbb{Z}_p)]$, concluding the proof. 
\end{proof}

\begin{remark}
Lemma \ref{lemma:Up} can also be deduced from \cite[Theorem 1.0.1]{pan:locally-an}, and this has the advantage that loc. cit. is stated already with completed cohomology instead of compactly supported completed cohomology and cusp forms. We have used \cite{howe:ocmfht} above simply because it was easier for us to check carefully our own normalizations! 
\end{remark}

\begin{remark}
The vectors used for $k \leq 0$ also exist for $k \geq 2$, where they induce an injection on all overconvergent modular forms. The same argument then recovers the fact that $U_p$ has non-negative slopes when $k \geq 2$ (of course, it is much simpler to deduce this from the action on $q$-expansions!). Representation-theoretically, this vector comes from a highest weight Verma module, which, when $k \geq 2$, admits an algebraic quotient; the classical forms are exactly those that factor through the algebraic quotient, and the vector we used above for the $k \geq 2$ case is the lower highest weight vector generating the kernel. This perspective is explained in \cite{howe:ocmfht}. 
\end{remark}

\begin{remark}
The reason one is led to use different normalizations depending on $k \leq 0$ or $k \geq 2$  is mostly explained by the form of the Hodge-Tate sequence  
\[ \mr{Lie} E(1)\rightarrow T_p(E) \otimes \mc{O}_Y \rightarrow \omega_{E^\vee}. \]
Indeed, since we are using the double-coset operator for the $p$-integral matrix $\diag(p,1)$  acting on $V_p E$, to obtain an operator with non-negative slopes it is natural to use the equivariant structure from $\omega_{E^\vee}$ for $k = 1$ and from $\mr{Lie}E=\omega_{E}^{-1}$ for $k=-1$, and similarly for larger $|k|$ by taking symmetric powers of $T_p(E)$. The equivariant structures differ by an absolute value of the determinant, which mainfests here as different powers of $p$ for the double-coset operator coming from $\diag(p,1)$. 
\end{remark}

\bibliographystyle{plain}
\bibliography{refs}

\end{document}